\newcommand{\LR}{\rLR}
\title[Bounds on the largest Kronecker multiplicities]
{Bounds on the largest Kronecker and induced \\ multiplicities of finite groups}
\author[Igor Pak, Greta Panova and Damir Yeliussizov]{
Igor Pak$^\star$, \ \  Greta Panova$^\dagger$ \ \ and \ \ Damir Yeliussizov$^\star$}
\thanks{\today}
\thanks{\thinspace ${\hspace{-.45ex}}^\star$Department of Mathematics,
UCLA, Los Angeles, CA~90095.
\hskip.06cm
Email:
\hskip.06cm
\texttt{{pak,\ts damir}@math.ucla.edu}}
\thanks{\thinspace ${\hspace{-.45ex}}^\dagger$Department of Mathematics,
 UPenn, Philadelphia, PA~19104 and IAS, Princeton, NJ~08540.
\hskip.06cm
Email:
\hskip.06cm
\texttt{panova@math.upenn.edu}}
\newcommand{\Ind}{\mathrm{Ind}}
\newcommand{\upa}{\uparrow}
\newcommand{\doa}{\downarrow}
\newcommand{\Irr}{{\operatorname{Irr} } }
\newcommand{\Conj}{\operatorname{Conj } }
\newtheorem{thm}{Theorem}[section]
\newtheorem{lemma}[thm]{Lemma}
\newtheorem{cor}[thm]{Corollary}
\newtheorem{prop}[thm]{Proposition}
\newtheorem{rem}[thm]{Remark}
\numberwithin{equation}{section} 
\def\zz{\mathbb Z}
\def\rr{\mathbb R}
\def\fq{{\mathbb F}_q}
\def\ov{\overline}
\def\sm{\smallsetminus}
\def\la{\lambda}
\def\ga{\gamma}
\def\de{\delta}
\def\ep{\epsilon}
\def\al{\alpha}
\def\be{\beta}
\def\ve{\varepsilon}
\def\vp{\varphi}
\def\cG{\mathcal G}
\def\<{\langle}
\def\>{\rangle}
\def\SO{ {\text {\rm SO} } }
\def\St{ {\text {\rm St} } }
\def\GL{ {\text {\rm GL} } }
\def\SL{ {\text {\rm SL} } }
\def\Sp{ {\text {\rm Sp} } }
\def\Suz{ {\text {\rm Suz} } }
\def\PSU{ {\text {\rm PSU} } }
\def\U{{\text {\rm U} } }
\def\rK{\text{{\rm \textbf{K}}}}
\def\rLR{\text{{\rm \textbf{C}}}}
\def\rA{\text{{\rm \textbf{A}}}}
\def\sirr{\text{{\rm \textsc{W}}}}
\def\rD{\text{{\rm \textit{b}}}}
\def\rq{\text{{\rm \textit{f}}}}
\def\0{{\mathbf 0}}
\def\.{\hskip.06cm}
\def\ts{\hskip.03cm}
\def\vk{k}
\def\co{c_1}
\def\ct{c_2}
\begin{document}

\begin{abstract}
We give new bounds and asymptotic estimates on the
largest Kronecker and induced multiplicities of finite
groups.  The results apply to large simple groups of
Lie type and other groups with few conjugacy classes.
\end{abstract}


\maketitle

\section{Introduction} \label{sec:intro}

Given a finite group $G$, what is the largest dimension $\rD(G)$
of an irreducible complex representation of~$G$?   Which
representations attain it?  These questions are both fundamental
and surprisingly challenging.  For large simple groups of Lie type
they have been intensely studied especially in the last few years,
when asymptotic tools allowed for the general picture to emerge.
For $S_n$ and $A_n$, these questions are classical and have been the
subject of intense investigation for decades.  Despite some remarkable
successes the precise asymptotics is yet to be completely determined.
See Section~\ref{s:Sn} for precise statements and
$\S$\ref{ss:finrem-hist} for the references.

In recent years, Stanley initiated the study of the largest
\emph{Kronecker} and \emph{Littlewood--Richardson coefficients}
for the symmetric group
(see $\S$\ref{ss:finrem-Stanley}).  He computed their asymptotics
and asked to determine the characters which attain these asymptotics.
In our recent paper~\cite{PPY} we resolve both problems.  Perhaps
surprisingly, we show that the answer is always the asymptotically
largest degree, suggesting connection with the earlier work.

In this paper we generalize some of our results from $S_n$
to general finite groups with few conjugacy classes.  This is a
large class which includes quasisimple groups of Lie type of rank
$\ge 2$, large permutation groups, and even some nilpotent groups
of large class.

\smallskip

For a finite group $G$, the \emph{Kronecker multiplicity} \ts $g(\rho,\vp,\psi)$,
where $\rho,\vp,\psi \in \Irr(G)$, are defined by the equation:
\begin{equation}\label{eq:Kron-def}
\vp \cdot \psi \. = \. \sum_{\rho\in \Irr(G)} \. g(\rho,\vp,\psi)\. \rho\ts,
\end{equation}
where $\vp\cdot \psi$ is the usual product of characters:
\ts $[\vp \cdot \psi](x) = \vp(x)\ts \psi(x)$. Similarly,
for every subgroup $H<G$, $\rho \in \Irr(G)$ and $\pi \in \Irr(H)$,
we define the \ts \emph{induced multiplicities} \ts $c(\rho,\pi)$ by the equation:
\begin{equation}\label{eq:LR-def}
\Ind^G_H\. \pi \, = \, \sum_{\pi\in \Irr(H)} \. c(\rho,\pi) \. \pi\ts.
\end{equation}
While there is a great deal of literature for
determining these coefficients for classical
Chevalley groups like $\GL_n(q)$ and $\SO_{n}(q)$, very little
is known about their asymptotics.  Even less is known for other
types and other families of groups.  In this paper we obtain
bounds on the largest Kronecker and induced multiplicities
and illustrate them in many examples.

\smallskip

\subsection{Kronecker multiplicities}
Let $G$ be a finite group and let \ts $\vk(G)=|\Irr(G)|$ \ts denotes
the number of conjugacy classes of~$G$.
Define the \emph{largest Kronecker multiplicity} of~$G$:
$$\rK(G) \. := \. \max_{\rho,\vp,\psi \in \Irr(G)}  \. g(\rho,\vp,\psi)\ts.
$$

\begin{thm}\label{t:Kron-intro}
We have:
$$
\frac{\rD(G)^2}{\vk(G)^{1/2} \ts |G|^{1/2}} \, \le \, \rK(G) \, \le \, \rD(G)\ts.
$$
\end{thm}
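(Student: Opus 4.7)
The plan is to prove the two inequalities by separate elementary arguments. The upper bound $\rK(G) \le \rD(G)$ is a consequence of interpreting $g(\rho,\varphi,\psi)$ as a multiplicity and exploiting its symmetry among the three arguments. The lower bound is obtained by a Cauchy--Schwarz estimate applied to a single well-chosen tensor product.

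For the upper bound, I would start from the trivial observation that $g(\rho,\varphi,\psi)\cdot \dim\rho \le \dim\varphi\cdot\dim\psi$, since $g$ is the multiplicity of $\rho$ in $\varphi\otimes\psi$. Using the character-theoretic identities
$$
\langle \varphi\psi,\rho\rangle \. = \. \langle \varphi,\rho\bar\psi\rangle \. = \. \langle \psi,\rho\bar\varphi\rangle,
$$
one gets the two companion bounds $g\cdot\dim\varphi \le \dim\rho\cdot\dim\psi$ and $g\cdot\dim\psi \le \dim\rho\cdot\dim\varphi$. Taking the best of the three, i.e.\ the one with the largest denominator, one concludes $g(\rho,\varphi,\psi)\le \min(\dim\rho,\dim\varphi,\dim\psi)\le \rD(G)$.

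For the lower bound, fix $\varphi\in \Irr(G)$ with $\dim\varphi = \rD(G)$ and decompose
$$
\varphi\cdot\bar\varphi \. = \. \sum_{\rho\in \Irr(G)} \. g(\rho,\varphi,\bar\varphi)\,\rho.
$$
Comparing dimensions and bounding each $g(\rho,\varphi,\bar\varphi) \le \rK(G)$ produces
$$
\rD(G)^2 \. = \. \sum_{\rho\in\Irr(G)} \. g(\rho,\varphi,\bar\varphi)\,\dim\rho \. \le \. \rK(G)\cdot \sum_{\rho\in\Irr(G)}\dim\rho.
$$
Cauchy--Schwarz together with the classical identity $\sum_\rho (\dim\rho)^2 = |G|$ bounds the remaining sum by $\vk(G)^{1/2}\,|G|^{1/2}$, and rearranging yields the claim.

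Both ingredients are essentially one-line applications of standard facts, so there is no real obstacle. The only conceptual choice is to test the maximum at the pair $(\varphi,\bar\varphi)$ with $\varphi$ of maximal dimension: the identity $\sum_\rho g(\rho,\varphi,\psi)\dim\rho = \dim\varphi\dim\psi$ is maximized precisely when both factors have dimension $\rD(G)$, which is exactly what produces the $\rD(G)^2$ in the numerator of the lower bound.
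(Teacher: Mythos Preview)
Your proof is correct. The upper bound argument is identical to the paper's (their equation~\eqref{eq:Kron-upper-groups} together with the symmetry~\eqref{eq:Kron-sym-groups}). For the lower bound the two arguments differ in the order of operations: you bound each $g(\rho,\varphi,\bar\varphi)$ by $\rK(G)$, pull this factor out of $\sum_\rho g(\rho,\varphi,\bar\varphi)\,\rho(1)=\rD(G)^2$, and then apply Cauchy--Schwarz to $\sum_\rho \rho(1)$ against the all-ones vector. The paper instead applies Cauchy--Schwarz to the vectors $\bigl(g(\rho,\varphi,\psi)\bigr)_\psi$ and $\bigl(\psi(1)\bigr)_\psi$ to obtain $\sum_\psi g(\rho,\varphi,\psi)^2 \ge \rho(1)^2\varphi(1)^2/|G|$, and only then passes to the maximal term. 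Your route is more direct for the theorem as stated; the paper's route has the advantage that it first proves the refined inequality $\rK(G;\rho,\varphi)\ge \rho(1)\varphi(1)/\bigl(\vk(G)^{1/2}|G|^{1/2}\bigr)$ valid for \emph{every} pair $\rho,\varphi\in\Irr(G)$ (their Proposition~\ref{p:Kron-refined-groups}), and then specializes to $\rho(1)=\varphi(1)=\rD(G)$. Incidentally, your choice of the pair $(\varphi,\bar\varphi)$ is harmless but unnecessary: any pair of characters of degree $\rD(G)$ would do.
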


Since $\ts \sqrt{|G|/\vk(G)}\le \rD(G)\le \sqrt{|G|}$, see~\eqref{eq:dim-groups},
this implies that for $\vk(G)$ small the
bound in the theorem is quite sharp.  The next result shows that $\rK(G)$ is
attained on characters of large degree in that case.

\begin{thm}  \label{t:Kron-main-intro}
Let \ts $\vp,\ts \psi\in \Irr(G)$. Suppose \ts
$\vp(1), \ts \psi(1) \ge \rD(G)/a$ \ts
for some \ts $a\ge 1$. Then there exists \ts
$\rho\in \Irr(G)$, such that:\
$$
\rho(1) \, \ge \, \frac{\rD(G)}{a\cdot \vk(G)^{1/2}}\.
\quad \text{and} \quad
g(\rho,\vp,\psi) \, \ge \, \frac{\rD(G)}{a^2\cdot \vk(G)}\..
$$
\end{thm}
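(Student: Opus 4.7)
\medskip

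\noindent\textbf{Proof proposal.} My plan is to combine the pigeonhole principle on the dimension identity $\vp(1)\ts\psi(1) = \sum_\rho g(\rho,\vp,\psi)\,\rho(1)$ with the symmetric upper bound $g(\rho,\vp,\psi) \le \rho(1)$, and to read off both required inequalities from a single witness $\rho^\ast$.

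Evaluating \eqref{eq:Kron-def} at $x=1$ and using the hypothesis,
$$
\sum_{\rho \in \Irr(G)} g(\rho,\vp,\psi)\ts \rho(1) \. = \. \vp(1)\ts\psi(1) \. \ge \. \rD(G)^2/a^2.
$$
Since at most $\vk(G)$ summands are nonzero, pigeonhole produces some $\rho^\ast \in \Irr(G)$ with
$$
g(\rho^\ast,\vp,\psi)\ts \rho^\ast(1) \. \ge \. \frac{\rD(G)^2}{a^2\ts\vk(G)}\ts.
$$

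The key auxiliary ingredient is the third-variable bound $g(\rho,\vp,\psi) \le \rho(1)$ for every irreducible~$\rho$. I would derive this by Parseval: writing $\overline{\vp}\ts\rho = \sum_{\psi\in\Irr(G)} g(\rho,\vp,\psi)\ts\psi$, character orthogonality gives
$$
\sum_{\psi \in \Irr(G)} g(\rho,\vp,\psi)^2 \. = \. \langle \overline{\vp}\ts\rho,\ts\overline{\vp}\ts\rho\rangle \. = \. \frac{1}{|G|}\sum_{x \in G} |\vp(x)|^2\ts|\rho(x)|^2 \. \le \. \rho(1)^2,
$$
the last inequality using $|\rho(x)| \le \rho(1)$ and $\langle \vp,\vp\rangle = 1$. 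This is the natural third-variable analogue of the upper bound $\rK(G) \le \rD(G)$ in Theorem~\ref{t:Kron-intro}.

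Applied to the pigeonhole witness, the bound $g(\rho^\ast,\vp,\psi) \le \rho^\ast(1)$ yields
$\rho^\ast(1)^2 \ge g(\rho^\ast,\vp,\psi)\ts\rho^\ast(1) \ge \rD(G)^2/(a^2\ts\vk(G))$, hence $\rho^\ast(1) \ge \rD(G)/(a\ts\vk(G)^{1/2})$. The trivial $\rho^\ast(1) \le \rD(G)$ then gives $g(\rho^\ast,\vp,\psi) \ge \rD(G)/(a^2\ts\vk(G))$. So the same $\rho^\ast$ witnesses both required inequalities. I do not anticipate any serious obstacle; once the bound $g \le \rho(1)$ is in place, the remainder is a two-line application of the trivial bounds $g\rho(1) \le \rho(1)^2$ and $g\rho(1) \le g\cdot\rD(G)$.
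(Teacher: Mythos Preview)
Your proof is correct and follows essentially the same route as the paper: pigeonhole on the dimension identity $\vp(1)\psi(1)=\sum_\rho g(\rho,\vp,\psi)\rho(1)$ to get a witness $\rho^\ast$ with $g(\rho^\ast,\vp,\psi)\rho^\ast(1)\ge \rD(G)^2/(a^2\vk(G))$, then extract the two bounds via $g\le \rho^\ast(1)$ and $\rho^\ast(1)\le \rD(G)$. The only cosmetic difference is in how you establish $g(\rho,\vp,\psi)\le \rho(1)$: the paper reads it off from the dimension inequality~\eqref{eq:Kron-upper-groups} (which uses the symmetry of $g$ and $g\cdot\psi(1)\le \rho(1)\vp(1)$), whereas you obtain it from the Parseval bound $\sum_\psi g(\rho,\vp,\psi)^2\le \rho(1)^2$.
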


\smallskip

\subsection{Induced multiplicities}
Let $H<G$.  For all $\rho \in \Irr(G)$ and
$\pi \in \Irr(H)$, define the \emph{largest induced multiplicity}:
$$
\rLR(G,H) \. := \. \max_{\rho \in \Irr(G)} \. \max_{\pi \in \Irr(H)} \. c(\rho,\pi).
$$

\begin{thm} \label{t:LR-intro-bounds}
Let $H<G$.  Then:
$$
\frac{1}{\vk(H)^{1/2} \ts \vk(G)^{1/2}} \. \ts [G:H]^{1/2} \, \le \,
\rLR(G,H) \, \le \, [G:H]^{1/2} \ts.
$$
\end{thm}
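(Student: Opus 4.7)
The plan is to handle the two bounds separately via Frobenius reciprocity plus a second-moment argument. Throughout, I use that $c(\rho,\pi) = \langle \Ind^G_H\pi,\, \rho\rangle_G = \langle \pi,\, \Res^G_H\rho\rangle_H$.

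For the upper bound, I would extract two one-sided inequalities by taking dimensions in the induction and restriction decompositions. On the induction side, $\Ind^G_H \pi = \sum_\rho c(\rho,\pi)\rho$ has dimension $[G:H]\pi(1)$, and since the summands are nonnegative this yields
$$c(\rho,\pi)\, \rho(1) \, \le \, [G:H]\,\pi(1).$$
On the restriction side, $\Res^G_H \rho = \sum_\sigma \langle \sigma,\Res\rho\rangle_H\, \sigma$ has dimension $\rho(1)$, and the $\pi$-term alone gives
$$c(\rho,\pi)\,\pi(1) \, \le \, \rho(1).$$
Multiplying the two yields $c(\rho,\pi)^2 \le [G:H]$, which is the desired upper bound.

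For the lower bound, the plan is a Cauchy--Schwarz / pigeonhole argument in the spirit of Theorem~\ref{t:Kron-intro}. Fixing $\rho \in \Irr(G)$ and using $\rho(1) = \sum_\pi c(\rho,\pi)\,\pi(1)$ together with Cauchy--Schwarz and $\sum_\pi \pi(1)^2 = |H|$, I get
$$\rho(1)^2 \, \le \, |H| \sum_{\pi \in \Irr(H)} c(\rho,\pi)^2.$$
Summing over $\rho \in \Irr(G)$ and using $\sum_\rho \rho(1)^2 = |G|$ gives the global second-moment estimate
$$\sum_{\rho,\pi} c(\rho,\pi)^2 \, \ge \, [G:H].$$
Since this sum has at most $\vk(G)\,\vk(H)$ terms, the maximum entry squared is at least the average, so
$$\rLR(G,H)^2 \, \ge \, \frac{[G:H]}{\vk(G)\,\vk(H)}\ts,$$
which is the claimed lower bound.

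I do not expect a serious obstacle: both inequalities are instances of ``dimension counting plus Frobenius reciprocity,'' and the only mildly nontrivial point is recognizing that after Cauchy--Schwarz the sum over $\rho$ collapses cleanly via $\sum_\rho \rho(1)^2 = |G|$, producing the index $[G:H]$ on the nose. The upper bound is tight up to the $[G:H]^{1/2}$ factor, and the lower bound loses only a $\sqrt{\vk(G)\vk(H)}$ factor, matching the analogous gap in Theorem~\ref{t:Kron-intro}.
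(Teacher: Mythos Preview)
Your proof is correct. The upper bound argument is essentially the paper's: the paper's Lemma~\ref{l:LR-squared-groups-2} rests on the same two one-sided inequalities \ts $c(\rho,\pi)\le \rho(1)/\pi(1)$ \ts and \ts $c(\rho,\pi)\le [G:H]\ts\pi(1)/\rho(1)$, only it packages them as a bound on $\sum_\rho c(\rho,\pi)^2$ rather than multiplying them directly for a single pair.

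Your lower bound, however, takes a genuinely different route. The paper reaches the key second-moment inequality \ts $\sum_{\rho,\pi} c(\rho,\pi)^2 \ge [G:H]$ \ts via the exact identity of Lemma~\ref{l:LR-squared-groups},
\[
\sum_{\rho,\pi} c(\rho,\pi)^2 \,=\, \sum_{\al\in\Conj(H)} \frac{z_\al(G)}{z_\al(H)}\ts,
\]
obtained from column orthogonality, and then isolates the identity term \ts $z_1(G)/z_1(H)=[G:H]$. You instead apply Cauchy--Schwarz to the restriction expansion \ts $\rho(1)=\sum_\pi c(\rho,\pi)\ts\pi(1)$ \ts against \ts $\sum_\pi \pi(1)^2=|H|$, and sum over~$\rho$; this is exactly the maneuver the paper itself uses in the Kronecker setting (Proposition~\ref{p:Kron-refined-groups}), transported to the induced case. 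Your approach is more self-contained and avoids the centralizer computation; the paper's approach buys an exact formula of independent interest (cf.~Remark~\ref{r:LR-identity} and Corollary~\ref{c:LR-squared-groups}), but for the bare inequality needed here the two are equivalent.
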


In other words, when $\vk(H),\vk(G)$ are small, the largest induced multiplicities
are close to~$\sqrt{[G:H]}$.  The following result again shows that large
induced multiplicities are attained at characters of large degree.

\begin{thm} \label{t:LR-intro-exist}
Let \ts $H<G$ and \ts $\rho\in \Irr(G)$.  Suppose \ts
$\rho(1)\ts \ge \ts |G|^{1/2}/a$, for some \ts $a\ge 1$.  Then there exists
\ts $\pi\in\Irr(H)$, such that:
$$
\pi(1) \, \ge \, \frac{|H|^{1/2}}{a \cdot \vk(H)}
\quad \text{and} \quad
c(\rho,\pi) \, \ge \, \frac{[G:H]^{1/2}}{a \cdot \vk(H)}\,.
$$
\end{thm}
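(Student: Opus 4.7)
The plan is to combine Frobenius reciprocity with a pigeonhole argument and then decouple the resulting product inequality using the two natural upper bounds on $\pi(1)$ and on $c(\rho,\pi)$.

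First, Frobenius reciprocity gives $c(\rho,\pi) = \langle \mathrm{Res}^G_H \rho,\ts \pi\rangle_H$, so
$$\rho(1) \, = \, (\mathrm{Res}^G_H \rho)(1) \, = \, \sum_{\sigma \in \Irr(H)} c(\rho, \sigma)\ts \sigma(1).$$
The hypothesis $\rho(1) \ge |G|^{1/2}/a$ together with the fact that this sum has only $\vk(H)$ terms yields, by pigeonhole, some $\pi \in \Irr(H)$ with
$$c(\rho,\pi)\ts \pi(1) \, \ge \, \frac{\rho(1)}{\vk(H)} \, \ge \, \frac{|G|^{1/2}}{a\ts \vk(H)}\ts.$$

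Second, I would decouple this product into the two separate target inequalities by invoking two universal upper bounds: the classical $\pi(1) \le |H|^{1/2}$, which follows from $\sum_{\sigma \in \Irr(H)} \sigma(1)^2 = |H|$, and the ceiling $c(\rho,\pi) \le [G:H]^{1/2}$ supplied by Theorem~\ref{t:LR-intro-bounds}. Dividing the lower bound on $c(\rho,\pi)\ts \pi(1)$ by each of these in turn, and using the identity $|G|^{1/2}/|H|^{1/2} = [G:H]^{1/2}$, produces $c(\rho,\pi) \ge [G:H]^{1/2}/(a\ts \vk(H))$ and $\pi(1) \ge |H|^{1/2}/(a\ts \vk(H))$ respectively. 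Both bounds hold simultaneously because they are witnessed by the same $\pi$ extracted in the pigeonhole step.

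I do not expect a real obstacle here. The only point requiring attention is the observation that the two universal ceilings on $\pi(1)$ and $c(\rho,\pi)$ convert a single lower bound on their product into exactly the two target inequalities; this is precisely why the theorem statement pairs $[G:H]^{1/2}$ with $|H|^{1/2}$. The same template of pigeonhole followed by ceiling-based decoupling underlies Theorem~\ref{t:Kron-main-intro} in the Kronecker setting, with Frobenius reciprocity here playing the role of the character product identity \eqref{eq:Kron-def}.
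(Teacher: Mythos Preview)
Your proposal is correct and follows essentially the same approach as the paper: both arguments pick $\pi$ maximizing the term $c(\rho,\pi)\ts\pi(1)$ in the restriction identity $\rho(1)=\sum_\pi c(\rho,\pi)\ts\pi(1)$, then decouple the resulting product lower bound using the ceilings $c(\rho,\pi)\le \rLR(G,H)\le [G:H]^{1/2}$ from Theorem~\ref{t:LR-intro-bounds} and $\pi(1)\le \rD(H)\le |H|^{1/2}$. The paper's write-up is slightly terser but the logic is identical.
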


\begin{rem}\label{r:intro-diag} {\rm
Note that Kronecker multiplicities are a special case of
induced multiplicities.  To see this, take $G=H\times H$ and
a diagonal subgroup $H<G$, and we have \ts $\rLR(H \times H, H) = \rK(H)$.
Observe that the bounds for $\rK(H)$ which follow from Theorem~\ref{t:LR-intro-bounds}
in this case are weaker than the bounds in Theorem~\ref{t:Kron-intro}
(see Remark~\ref{r:Kron-comparison}). This follows from the dependence
of $\rLR(G,H)$ on the embedding $H \hookrightarrow G$.  For example,
for $G=H\times H$ as above and $(H \times 1)\hookrightarrow G$,
we have \ts $\rLR(H \times H, H) = \rD(H)$, which can be much larger
than $\rK(H)$ (see~$\S$\ref{s:ex}).
}\end{rem}

\smallskip
\begin{rem}{\rm \label{r:intro-Sn}
As we mentioned earlier, for the symmetric groups
$G=S_n$ and $H=S_k\times S_{n-k}$ the Kronecker and induced multiplicities
are called the Kronecker and the Littlewood--Richardson coefficients,
respectively. They play a crucial role in Algebraic Combinatorics
and its applications, and have been intensely studied from both enumerative,
algebraic, geometric, probabilistic and computational point of view
(see $\S$\ref{ss:finrem-Stanley} and~\cite{PPY} for the references).
}
\end{rem}

\medskip

\subsection*{Structure of the paper}
In sections~\ref{s:conj} and~\ref{s:degree}, we review
known bounds on \ts $\vk(G)$ and $\rD(G)$, respectively,
for various classes of groups. In Section~\ref{s:Sn} we
discuss the symmetric group case and our state of knowledge
on $\rD(S_n)$.  Then, in sections~\ref{s:sym} and~\ref{s:ex},
we apply our bounds to various examples of groups and subgroups.
We prove theorems~\ref{t:Kron-intro} and~\ref{t:Kron-main-intro}
in Section~\ref{s:Kron}.  We then prove our theorems~\ref{t:LR-intro-bounds}
and~\ref{t:LR-intro-exist} in Section~\ref{s:LR}.  We conclude with
open problems and final remarks (Section~\ref{s:fin-rem}).

\medskip

\subsection*{Notation}
Most our notation are standard.  Let \ts $\Irr(G)$ \ts  denotes the set of
\emph{irreducible characters} of~$G$, let \ts $\Conj(G)$ \ts be the set of conjugacy
classes, and \ts $\vk(G)=|\Irr(G)|=|\Conj(G)|$ the \emph{number of conjugacy classes}.
By $|C_G(x)|$ we denote  the size of the centralizer of element $x\in G$.

\bigskip

\section{Number of conjugacy classes}\label{s:conj}

\subsection{General bounds}\label{ss:conj-gen}
There are many general lower and upper bounds for $\vk(H)$; we will only
mention some key results but will not be able to review it.  The
subject was initiated by E.~Landau in~1903 with the first quantitative bound
\ts $\vk(H) = \Omega\bigl(\log \log |H|\bigr)$ \ts by Erd\H{o}s and Tur\'an (1968).  Recently,
Jaikin-Zapilrain~\cite{JZ} showed the first super-log lower bound for nilpotent groups,
but for general finite groups there is only a sub-log bound due to Pyber~\cite{Pyb},
slightly improved in~\cite{BMT,Kel}.  For a nilpotent group $H$ of bounded
class~$r$, Sherman~\cite{She} proved:
\begin{equation} \label{eq:sherman}
\vk(H) \. \ge \. r\ts |H|^{1/r} \ts -\ts r \ts + \ts 1\ts.
\end{equation}

In a different direction,
for a permutation group $H < S_n$, Kov\'acs and Robinson~\cite{KR} showed that \ts
$\vk(H) \le 5^n$.  This was improved to \ts $2^{n-1}$ \ts in~\cite{LP}, and further to
\ts $\vk(H) \le 5^{(n-1)/3}$ for $n\ge 4$, \ts in~\cite{GM}.

Finally, there are general upper and lower bounds on the number of conjugacy classes,
notably:
\begin{equation} \label{eq:Gal}
\frac{\vk(H)}{[G:H]} \, \le \, \vk(G) \, \le \,\vk(H) \cdot [G:H] \quad \text{for} \ \, H<G,
\end{equation}
see~\cite{Gal}. Sometimes these bounds are written in terms of the \emph{commuting probability}.
Notably, Guralnick and Robinson~\cite{GR} prove
$$\vk(G)\. \le \. \sqrt{|G| \. \vk(F)}\.,
$$
where $F$ is the Fitting subgroup of~$G$.  In particular, $\ts \vk(G) \le \, \sqrt{|G|}$
when $Z(G)=1$, i.e.\ when the center $G$ is trivial.

\medskip

\subsection{Number of conjugacy classes for groups of Lie type}
It was shown by Liebeck and Pyber~\cite{LP} that
for a completely reducible subgroup $G < \GL_n(q)$,
we have \ts $\vk(G) \le q^{10 \ts n}$.
Further, when $G$ is a quasisimple group of Lie type
over $\fq$ of rank~$r$, they show \ts $\vk(G) \le (6\ts q)^{r}$.
Fulman and Guralnick~\cite{FG} further improve these bounds
to
\begin{equation}\label{eq:conj-FG}
q^r \. \le \. \vk(G) \. \le \. 27.2\ts q^{r}\ts,
\end{equation}
with better constants in special cases.  In fact,
in many cases either sharp asymptotic
bounds, or even the exact formulas are known, see examples in
Section~\ref{s:ex}.


\bigskip

\section{Largest degree}\label{s:degree}

\subsection{General bounds}\label{ss:degree-gen}
Let $\rho(1)$ denote the \emph{degree} of~$\rho$, and let
$$\rD(G) \. :=  \. \max_{\rho \in \Irr(G)} \ts \rho(1)
$$
denote the largest degree.  Recall the \emph{Burnside identity}:
\begin{equation}\label{eq:Burnside}
\sum_{\rho\in \Irr(G)} \. \rho(1)^2 \. = \. |G|\ts.
\end{equation}
This immediately implies that for all finite groups~$G$, we have:
\begin{equation}\label{eq:dim-groups}
\sqrt{|G|/\vk(G)} \. \le \. \rD(G)\le \sqrt{|G|}\..
\end{equation}
There are a few lower and upper bounds for general groups.  Notably,
if \ts $\rD(G) < \sqrt{|G|}$, then
$$
\rD(G) \, \le \, \sqrt{|G|}\. - \.\frac12\.\sqrt[4]{|G|}\ts,
$$
and this is the best bound of this type~\cite{HLS},
improving on earlier bounds by Isaacs~\cite{Isa}
and others.

One should, of course, expect better upper bounds for large non-solvable groups.
For example, if $\rho\in \Irr(G)$ and $\rho(1)^2\ge |G|/2$, then $\rho^2$ contains
every irreducible character, i.e.\ $g(\rho,\rho,\chi)>0$ \ts for all \ts
$\chi\in  \Irr(G)$.  This property is known for all simple groups of
Lie type~\cite{HSTZ} except for $\PSU_n(q)$, and is a subject of intense
study for~$A_n$ and~$S_n$, see~\cite{Ike,LuS,PPV}. In the opposite direction,
for all simple groups one has \ts $\rD(G)\ge \sqrt[3]{|G|}$, see~\cite{KS}.

\subsection{Largest degree for groups of Lie type}\label{ss:degree-Lie}
For a natural class of reductive linear algebraic groups $G$ of dimension~$d$, rank~$r$
over~$\fq$, Kowalski~\cite[Prop.~5.5]{Kow} uses an argument by J.~Michel
to prove:
$$
\rD(G) \, \le \, \frac{|G|}{(q-1)^r \ts |G|_p} \, \le \, (q+1)^{(d-r)/2},
$$
where $N_p$ denotes the largest power of~$p$ which divides~$N$.  He also proves
that the first inequality is sharp when $q=p^a$ is large enough, and
obtains explicit bounds for several series, such as $\GL_n(q)$,
 $\Sp_{2n}(q)$, etc.

More general and sometimes more precise bounds were obtained later
by Larsen, Malle and Tiep~\cite{LMT}. For all $G(q)$ over~$\fq$, of dimension~$d$, rank~$r$,
characteristic~$p$, they prove:
\begin{equation}\label{eq:dim-LMT}
A\ts  (\log_q r)^\al \. |G|_p\, \le \, \rD(G) \, \le \, B\ts  (\log_q r)^\be \. |G|_p \,,
\end{equation}
for some universal constants \ts $A,B>0$ and $\al,\be\ge 0$.  In fact, the $\log$
terms disappear for exceptional groups of Lie type.  They also obtain sharp explicit
bounds in special cases, see Section~\ref{s:ex}.  In full generality, we obtain the
following result.

\begin{thm} \label{t:Kron-Lie}
Let $\cG$ be a simple algebraic group of characteristic~$p$, rank~$r$, and
a finite group $G(q):=\cG^F$ over $\fq$, corresponding to a Frobenius map $F: \cG\to \cG$.
Then:
$$ C \. \frac{\bigl(|G|_p\bigr)^2 \ts (\log_q r)^{\ga}}{q^{r/2} \. \sqrt{|G|} }
\, \le \, \rK\bigl(G(q)\bigr) \, \le \, D\ts  (\log_q r)^\de \. |G|_p
$$
where $C,D>0$ and $\ga,\de\ge 0$ are universal constants independent of~$\cG$ and~$q$.
\end{thm}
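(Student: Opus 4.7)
\medskip

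\textbf{Proof proposal.} \, The plan is to deduce both inequalities by feeding the known bounds on $\vk\bigl(G(q)\bigr)$ and $\rD\bigl(G(q)\bigr)$ for quasisimple groups of Lie type into Theorem~\ref{t:Kron-intro}. Since $G(q) = \cG^F$ is (quasi)simple of rank~$r$, we may appeal to \eqref{eq:conj-FG} from Fulman--Guralnick, which gives
$$
q^r \. \le \. \vk\bigl(G(q)\bigr) \. \le \. 27.2\ts q^{r}\ts,
$$
and to the Larsen--Malle--Tiep estimate \eqref{eq:dim-LMT}, which gives
$$
A\ts (\log_q r)^\al \. |G|_p \, \le \, \rD\bigl(G(q)\bigr) \, \le \, B\ts (\log_q r)^\be \. |G|_p
$$
for universal constants $A,B>0$, $\al,\be\ge 0$.

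For the upper bound, Theorem~\ref{t:Kron-intro} gives $\rK(G) \le \rD(G)$ for every finite group, so the upper estimate in \eqref{eq:dim-LMT} directly yields
$$
\rK\bigl(G(q)\bigr) \, \le \, \rD\bigl(G(q)\bigr) \, \le \, B\ts (\log_q r)^\be \. |G|_p\ts,
$$
i.e.\ the desired right-hand inequality with $D = B$ and $\de = \be$.

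For the lower bound, the left-hand inequality of Theorem~\ref{t:Kron-intro} reads
$$
\rK(G) \, \ge \, \frac{\rD(G)^2}{\vk(G)^{1/2}\.|G|^{1/2}}\ts.
$$
Plugging in the lower bound for $\rD$ from \eqref{eq:dim-LMT} and the upper bound for $\vk$ from \eqref{eq:conj-FG} gives
$$
\rK\bigl(G(q)\bigr) \, \ge \, \frac{A^2 (\log_q r)^{2\al} \. \bigl(|G|_p\bigr)^2}{\sqrt{27.2}\. q^{r/2}\. |G|^{1/2}}\ts,
$$
which is the claimed lower bound with $C = A^2/\sqrt{27.2}$ and $\ga = 2\al$.

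The argument is essentially a plug-and-play combination of three black-box inequalities, so no step is genuinely delicate; the only care required is verifying that \eqref{eq:conj-FG} and \eqref{eq:dim-LMT} indeed apply uniformly to every $G(q)=\cG^F$ covered by the hypothesis (including twisted Frobenius maps, where one should check that both cited references state their estimates in this generality, possibly after absorbing bounded factors into the constants $A$, $B$, $C$, $D$). The only mildly subtle point is that one must interpret $\log_q r$ as~$1$ (or any positive constant) when $r\le q$, so that the bounds are non-vacuous in the small-rank regime; this is consistent with the convention in~\cite{LMT}, and the universal constants can then be chosen to absorb this.
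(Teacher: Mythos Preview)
Your proof is correct and follows exactly the paper's approach: plug the Fulman--Guralnick bound~\eqref{eq:conj-FG} and the Larsen--Malle--Tiep bound~\eqref{eq:dim-LMT} into Theorem~\ref{t:Kron-intro}. The paper's proof is a one-line reference to precisely these two ingredients, so your write-up is simply a more detailed version of the same argument, with the constants made explicit.
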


\begin{proof} In Theorem~\ref{t:Kron-intro}, use bounds on $\vk(G_n)$ and
$\rD(G_n)$ in~\eqref{eq:conj-FG} and~\eqref{eq:dim-LMT}, respectively.
\end{proof}

\bigskip

\section{Symmetric groups}\label{s:Sn}

\subsection{Largest degree}\label{s:Sn-ve}
Recall that $\vk(S_n) = p(n)$, the number of integer partitions
of~$n$.
The \emph{Hardy--Ramanujan asymptotic formula} gives:
$$p(n) \. \sim  \. \frac{1}{4\ts n \ts
\sqrt{3}} \, e^{\pi\ts \sqrt{\frac{2n}{3}}} \quad \text{as} \ \ n\to \infty.
$$
In 1985, Vershik and Kerov~\cite{VK2} proved that for all $n$ large enough:
\begin{equation}\label{eq:VK}
\sqrt{n!} \. e^{-\co\ts \sqrt{n}\ts (1+o(1))} \, \le \, \rD(S_n) \, \le \,  \sqrt{n!} \. e^{-\ct\ts \sqrt{n}\ts (1+o(1))} ,
\end{equation}
where
\begin{equation}\label{eq:VK-const}
\co \. = \. \pi\ts \sqrt{\frac{1}{6}} \. \approx \ts 1.2825 \quad
\text{and} \quad \ct \. = \. \frac{\pi-2}{\pi^2} \. \approx \ts 0.1157
\end{equation}
Note that the lower bound follows from~\eqref{eq:dim-groups},
but the upper bound is rather remarkable.  The following result
is an application.

\medskip

\subsection{Smaller degrees}\label{s:Sn-ve}
Let \ts $\sirr(G) = \{\rho\in \Irr(G), \, \rho(1)<\rD(G)\}$, and let
$\ve(G)$ be defined as follows:
$$
\ve(G) \, = \, \frac{\sum_{\rho\in \sirr(G)} \. \rho(1)^2}{\rD(G)^2}\ts.
$$
One can think of $\ve(S_n)$ as the ratio of probability of non-largest to largest
characters of $S_n$ w.r.t.\ the Plancherel measure, see~\cite{Bia,Rom,VK1}.
In~\cite{LMT}, the authors show
that $\ve(S_n) = \Omega(1)$.  In fact, they prove that there exist a universal
constant \ts $\ep>0$ \ts s.t.\ $\ve(G)>\ep$ for all non-abelian finite simple groups~$G$.
The former result was improved in~\cite{HHN} to $\ve(S_n) =\Omega(n)$.

\begin{thm} \label{t:Sn-ve}
There exist universal constants \ts $a_2> a_1 > 0$, such that:
$$e^{a_1\sqrt{n}} \, \le \,
\ve(S_n) \, \le \, e^{a_2\sqrt{n}}
$$
\end{thm}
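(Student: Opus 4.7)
The plan is to combine Burnside's identity with the Vershik--Kerov bounds \eqref{eq:VK} and reduce the theorem to controlling the multiplicity of the maximum degree.

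Set $m := |\{\rho \in \Irr(S_n) : \rho(1)=\rD(S_n)\}|$. Burnside's identity \eqref{eq:Burnside} gives
\[
\ve(S_n) \,=\, \frac{n! \.-\. m\ts \rD(S_n)^2}{\rD(S_n)^2} \,=\, \frac{n!}{\rD(S_n)^2}\,-\,m\ts,
\]
and squaring \eqref{eq:VK} yields the two-sided control
\[
e^{\ts 2\ct\sqrt{n}(1-o(1))} \,\le\, \frac{n!}{\rD(S_n)^2} \,\le\, e^{\ts 2\co\sqrt{n}(1+o(1))}.
\]
The upper bound of the theorem follows at once: since $m\ge 1$, we have $\ve(S_n) \le n!/\rD(S_n)^2 \le e^{\ts 2\co\sqrt{n}(1+o(1))}$, so any $a_2>2\co$ works for $n$ large.

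For the lower bound, the task reduces to showing that $m\ts\rD(S_n)^2 \le (1-\delta)\ts n!$ for some absolute $\delta>0$ --- equivalently, that the Plancherel weight of non-maximal characters stays above a positive constant. Combined with the lower bound on $n!/\rD(S_n)^2$ above, this gives $\ve(S_n) \ge \delta\cdot n!/\rD(S_n)^2 \ge e^{\ts a_1\sqrt{n}}$ for any $a_1<2\ct$ and $n$ large. My plan to establish this is direct: fix a maximizing partition $\lambda^*$ (whose rescaled shape approximates the Vershik--Kerov limit shape, so its boundary has length $\Theta(\sqrt{n})$), and produce an explicit family $\cF$ of distinct non-maximal partitions $\mu$ by moving $k$ boxes along the boundary of $\lambda^*$, between corners and co-corners. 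The hook-length formula controls $f^\mu/f^{\lambda^*}$ multiplicatively by the number and magnitude of the hook changes, while the count of such $\mu$ grows as a binomial coefficient in $\sqrt{n}$.

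The main technical obstacle is then to choose the perturbation scale $k$ so that the exponential-in-$\sqrt{n}$ count strictly dominates the exponential-in-$\sqrt{n}$ dimension loss, giving the bound $\sum_{\mu\in\cF}(f^\mu)^2 \,\ge\, \delta\ts n!$. A clean shortcut would be a direct estimate $m = O(\operatorname{poly}(n))$, which is widely expected for $S_n$: indeed it would yield immediately $\ve(S_n) \ge n!/\rD(S_n)^2 - \operatorname{poly}(n) \ge e^{\ts a_1\sqrt{n}}$ for any $a_1<2\ct$ and $n$ large enough.
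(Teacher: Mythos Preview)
Your upper bound is correct and matches the paper: from Burnside and the lower bound in~\eqref{eq:VK} you get $\ve(S_n)\le n!/\rD(S_n)^2\le e^{2\co\sqrt{n}(1+o(1))}$.

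The lower bound, however, is not proved. You correctly reduce to showing that the Plancherel mass of non-maximal characters is bounded below by an absolute constant~$\delta$, but you do not establish this. Your perturbation plan is only a sketch: you do not carry out the hook-length comparison, you do not specify the scale~$k$, and you yourself flag the balancing of the count against the dimension loss as ``the main technical obstacle'' without resolving it. Your proposed shortcut $m=O(\operatorname{poly}(n))$ is not known; in fact even the boundedness of $M(n)$ is stated as a conjecture in the paper. So as written the argument has a genuine gap.

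The paper's proof of the lower bound avoids this entirely by a short dichotomy. It invokes \cite[Prop.~3.5(1)]{HHN}, which gives $\ve(S_n)\ge M(n)/16$ directly (this is a nontrivial input specific to~$S_n$). Combining this with your own identity $\ve(S_n)=n!/\rD(S_n)^2-M(n)$ and the upper bound in~\eqref{eq:VK} yields
\[
17\,\ve(S_n)\ \ge\ \ve(S_n)+M(n)\ =\ \frac{n!}{\rD(S_n)^2}\ \ge\ e^{2\ct\sqrt{n}(1-o(1))},
\]
and the lower bound follows with any $a_1<2\ct$. No bound on $M(n)$ and no perturbation analysis is needed. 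If you want to salvage your approach, the cleanest fix is simply to replace the unexecuted perturbation step by the citation to~\cite{HHN}.
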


\begin{proof}  For the upper bound, we have:
$$
\ve(S_n) \, \le \, \frac{n!}{\rD(S_n)}\ts,
$$
and the result follows from the lower bound in~\eqref{eq:VK}.
For the lower bound, let $M(n)$ denote the number of characters
of the largest degree, i.e.\
$M(n) = |\Irr(S_n)\sm\sirr(S_n)|$.  It was proved in
\cite[Prop.~3.5(1)]{HHN}
that \ts $\ve(S_n)\ge M(n)/16$.  On the other hand, from the upper bound
in~\eqref{eq:VK}, we have:
$$
\ve(S_n) \, = \, \frac{n! \, - M(n)\ts \rD(S_n)^2}{\rD(S_n)^2}
\, \ge \,  e^{2\ct\ts \sqrt{n}\ts (1+o(1))} \. - \. M(n),
$$
and the result follows by combining these two inequalities.
\end{proof}

\begin{rem} {\rm
We conjecture that the sequence \ts $M(1),M(2),\ldots$ \ts is bounded.
In fact, this is an interesting question for non-largest
degrees as well.  Let
$$
\rq(n) \, := \, \max_{k} \,\. \bigl|\bigl\{\la\vdash n~:~\chi^\la(1)=k\bigr\}\bigr|\ts.
$$
For example, $\rq(13)=6$ \ts since \ts $d(94)=d(76)=d(10\ts\ts 21)=d(321^8)=d(2^61)=d(2^41^5)=429$,
where $d(\la):=\chi^\la(1)$.  We conjecture that the sequence \ts $\rq(1),\rq(2),\ldots$ \ts
is unbounded.
}
\end{rem}
\bigskip

\section{General linear groups}\label{s:sym}

\subsection{Bounds on Kronecker multiplicities}
For \ts$G_n =\GL_n(q)$, there are sharp bounds on all parameters we need.
We have:
$$\left(1\. - \. \frac1q - \frac{1}{q^2}\right)\ts q^{n^2} \. \le \. |G_n| \. \le \. q^{n^2}\ts,
$$
where the first inequality is given in~\cite{Pak}. Similarly,
$$q^{n}\. - \. q^{n-1} \. \le \. \vk(G_n) \. \le \. q^{n}\ts,
$$
where the lower bound follows from~\eqref{eq:conj-FG} and upper bound is given
in~\cite[Lemma~5.9(ii)]{MR}.
Finally,
$$\frac14 \ts \bigl(1+\log_q (n+7)/2\bigr)^{3/4} \ts q^{n(n-1)/2}\, \le \, \rD(G_n) \, \le \,  13 \ts \bigl(1+\log_q (n+1)\bigr)^{2.54}\ts q^{n(n-1)/2}\ts,
$$
see~\cite[Thm.~5.1]{LMT}.\footnote{There does
not seem to be a closed formula for $\rD(\GL_n(q))$, however the \emph{Steinberg character}
\ts $\St$ \ts is asymptotically the largest unipotent irreducible character;
see discussion in~\cite[$\S$5]{LMT}
(cf.~\cite{HSTZ}).}  Theorem~\ref{t:Kron-intro} then gives the upper and lower bounds:
$$
\frac1{16} \ts \bigl(1+\log_q (n+7)/2\bigr)^{3/2} \ts q^{n(n-3)/2}\, \le \,\rK(G_n)
\, \le \, 13 \ts \bigl(1+\log_q (n+1)\bigr)^{2.54}\ts q^{n(n-1)/2}\ts.
$$

\medskip

\subsection{Induced multiplicities from a block subgroup}
Let $q$ be fixed, $n=2m$, $n\to \infty$, and let $G_n =\GL_n(q)$ be as above.
Consider a subgroup $H_n:=(G_m\times G_m)$ of~$G_n$  of index
\ts $[G_n:H_n] = q^{\frac{n^2}{2} \ts +\ts O(n)}$.
Clearly, $\vk(H_n) = \vk(G_m)^2$.  Theorem~\ref{t:LR-intro-bounds} then gives:
$$
\LR(G_n, H_n) \, = \, q^{\frac{n^2}{4} \ts +\ts O(n)}\ts.
$$

\medskip

\subsection{Induced multiplicities from a parabolic subgroup}
Similarly, let $n=2m$, $G_n =\GL_n(q)$, and let \ts $B_n<G_n$ \ts
be a subgroup of matrices \ts $(x_{ij})\in G_n$ \ts
with $x_{ij} =0$ for all $i>m$, $j\le m$. Thus $[G_n:B_n] = q^{\frac{n^2}{4} \ts +\ts O(n)}$.
We also have \ts $\vk(B_n) = q^{O(n)}$.  To prove this, take a normal subgroup $A_n$ of
upper right $m\times m$ matrices, $B_n/A_n\simeq H_n$ as above, and consider the action
of $B_n$ on $A_n$.  Then use the exact formula in~\cite{FF} and estimates in~\cite{Pak}
(we omit the details).  Now  Theorem~\ref{t:LR-intro-bounds} gives:
$$
\LR(G_n, B_n) \, = \, q^{\frac{n^2}{8} \ts +\ts O(n)}\ts.
$$

\bigskip

\section{Further examples}\label{s:ex}

\subsection{Linear groups of rank~1}\label{ss:SL2p} Let \ts
$G_p := \SL_2(p)$, where $p$ is a prime.
Then:
$$|G_p| \ts =\ts p^3-p\ts, \quad \vk(G_p)\ts =\ts p+4\ts,
\quad \rD(G_p) = p+1\ts.
$$
In this case the whole character table can be computed by hand, so the lower
bound in Theorem~\ref{t:Kron-intro} is neither sharp nor useful.

\medskip

\subsection{Suzuki groups}\label{ss:Suz} Let \ts $G_n :=  \Suz(q)$,
where \ts $q=2^{2n+1}$ and \ts $n\to \infty$.  Then:
$$|G_n| \ts =\ts q^2(q^2+1)(q-1)\ts,
\ \, \vk(G) \ts =\ts q+3\ts, \ \ \, \rD(G_n) = q^2 + O(q^{3/2}).
$$
By Theorem~\ref{t:Kron-intro} we have a sharp bound:
\ts $\rK(G_n) = q^2 + O(q^{3/2})$.

\medskip

\subsection{Unitriangular groups}  Let \ts $H_n = \U_n(q)$ \ts be the group of
upper triangular matrices with ones on the diagonal.  Let $q$ be fixed and
$n\to \infty$.  We have:
$$
|H_n| \ts =\ts q^{\binom{n}{2}}, \quad q^{\frac{n^2}{12} \ts +\ts O(n)} \. \le  \. \vk(H_n) \. \le \. q^{\frac{7n^2}{44} \ts +\ts O(n)}, \quad
\rD(H_n) \, =\. q^{\mu(n)},
$$
where $\mu(n) = \lfloor (n-1)^2/4\rfloor$, see~\cite{Isa2}.  Here the
lower bound on $\vk(H_n)$ is by Higman~\cite{Hig}, and the upper bound
on $\vk(H_n)$ is by Soffer~\cite{Sof}.  Note that Sherman's
bound~\eqref{eq:sherman} is quite weak in this case.  Similarly,
the lower bound in~\eqref{eq:dim-groups} is very weak in this case,
while the upper bound is quite sharp.  Theorem~\ref{t:Kron-intro} then gives:
$$
q^{\frac{n^2}{11} \ts +\ts O(n)}\. \le  \. \rK(H_n) \. \le  \. q^{\frac{7\ts n^2}{44} \ts +\ts O(n)}\ts.
$$
It would be interesting to see if this bound can be improved, perhaps, by using
the supercharacter theory, see~\cite{DI,Yan}.

\medskip

\subsection{Unitriangular subgroup}
Let $q$ be fixed.  In notation above, note that
\ts $H_n= \U_n(q)$ is a subgroup of $G_n =\GL_n(q)$ \ts of index
\ts $[G_n:H_n] = q^{\frac{n^2}{4} \ts +\ts O(n)}$, \ts as \ts $n\to \infty$.
Theorem~\ref{t:LR-intro-bounds} then gives:
$$
q^{\frac{15\ts n^2}{88} \ts +\ts O(n)}\, \le \, \LR(G_n, H_n) \, \le \, q^{\frac{n^2}{4} \ts +\ts O(n)}\ts.
$$

\medskip

\subsection{The Monster group}\label{ss:monster}
Let $M$ be the \emph{Monster group}.  We have:
$$
|M| \approx 8.08 \cdot 10^{53}, \qquad \vk(M)=194,
\qquad \rD(M) \approx 2.59 \cdot 10^{26}\ts.
$$
In notation of~$\S$\ref{s:Sn-ve}, we have \ts $\ve(M) \approx 11.02$, which
follows from many ``large but not largest'' irreducible characters.
On the other hand, equation~\eqref{eq:dim-groups} gives:
$$\sqrt{|M|/\vk(M)} \approx 6.45\cdot 10^{25} \, \le \,
\rD(M) \approx 2.59 \cdot 10^{26} \, \le \, \sqrt{|M|} \approx 8.99 \cdot 10^{26}\ts.
$$
The large gap in the first inequality can be explained by a large number
of characters with very small degree.

We compare the exact value of \ts $\rK(M)$ \ts computed directly from
the character table~\cite{atlas}, with estimates in Theorem~\ref{t:Kron-intro}:
$$\frac{\rD(M)^2}{\sqrt{\vk(M)\ts |M|}} \approx 5.35 \cdot 10^{24}
\, \le \, \rK(M)  \approx 2.15 \cdot 10^{25} \, \le \, \rD(M) \approx 2.59 \cdot 10^{26}\ts.
$$
Again both gaps can be similarly explained by the presence of many
``relatively small'' irreducible characters which allow the
isotypical components to be relatively evenly distributed (cf.\
the proof of Theorem~\ref{t:Kron-intro} in $\S$\ref{ss:Kron-max}).
In fact, $\rK(M)$ is much larger than the \emph{average Kronecker multiplicity}:
$$
\frac{1}{\vk(M)^3} \. \sum_{\rho,\vp,\psi\in \Irr(M)} \ts g(\rho,\vp,\psi) \ts \approx \ts 3.38\cdot 10^{22}\ts,
$$
which can be explained by the fact that if even one of the three characters
has small degree, then so does \ts $g(\rho,\vp,\psi)$, see~\eqref{eq:Kron-upper-groups}.

In fact, the bound $\rA(G)\ge |G|$ (see next section) is unusually tight
in this case:\footnote{See A.~Hulpke's answer in \ts
\url{https://math.stackexchange.com/questions/2668042}}
$$
\aligned
\rA(M) \ \ \  = \  808017424794512875894769468067441075690144312450960558 & \\
|M| \quad  = \     808017424794512875886459904961710757005754368000000000 &
\endaligned
$$
There is a simple explanation, of course: the centralizer sizes $z_\al$ rapidly
decrease as we go down the list.  Here are the first three of them other than
$z_1=|M|$, corresponding to the three largest maximal subgroups of~$M$:
$$
2\ts |B| \approx    8.31\cdot 10^{33}, \qquad
2^{25} \ts |Co_1|  \approx    1.40 \cdot 10^{26},
\qquad 3\ts |Fi_{24}|  \approx    3.77 \cdot 10^{24}.
$$
When these are subtracted from $\rA(M)$ we obtain a relatively small remainder:
$$
\rA(M)\ts -\ts |M| \ts - \ts 2\ts |B| \ts - \ts 2^{25} \ts |Co_1|
\ts - \ts 3\ts |Fi_{24}| \. \approx \. 1.00 \cdot 10^{19}\ts.
$$

\bigskip

\section{Kronecker multiplicities}\label{s:Kron}

\subsection{General inequalities}\label{ss:Kron-gen}
First, note:
$$
g(\rho,\vp,\psi) \. = \. \bigl\<\rho,\ts \vp \cdot \psi\bigr\> \. = \. \bigl\<\ov{\rho}\cdot \vp \cdot \psi, 1\bigr\>.
$$
This implies the symmetries
\begin{equation}\label{eq:Kron-sym-groups}
g(\rho,\vp,\psi) \. = \. g(\ov{\vp},\ov{\rho},\psi) \. = \.g(\ov{\vp},\psi,\ov{\rho})  \. = \. \ldots
\end{equation}
In particular, we have a general upper bound:
\begin{equation}\label{eq:Kron-upper-groups}
g(\rho,\vp,\psi) \. \le \. \rho(1) \cdot \min\bigl\{\vp(1)/\psi(1), \ts \psi(1)/\vp(1)\bigr\} \. \le \. \rho(1)\ts.
\end{equation}

\smallskip

\begin{prop}  \label{p:Kron-main-groups}
Let \ts $\rho,\vp,\psi\in \Irr(G)$. Suppose \ts $g(\rho,\vp,\psi) \ge \rD(G)/a$, for some $a\ge 1$.
Then: \ts $\rho(1),  \vp(1), \psi(1) \ge \rD(G)/a$.
\end{prop}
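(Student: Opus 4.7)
\medskip

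The plan is to deduce the proposition directly from the two pieces of machinery just established in $\S$\ref{ss:Kron-gen}: the trivial upper bound \eqref{eq:Kron-upper-groups} and the symmetries \eqref{eq:Kron-sym-groups}.

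First I would observe that the hypothesis together with the last inequality of \eqref{eq:Kron-upper-groups} gives immediately
$$
\frac{\rD(G)}{a} \, \le \, g(\rho,\vp,\psi) \, \le \, \rho(1)\ts,
$$
so the lower bound $\rho(1) \ge \rD(G)/a$ is free.

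The remaining two inequalities follow by symmetry. Using \eqref{eq:Kron-sym-groups}, I would rewrite
$$
g(\rho,\vp,\psi) \, = \, g(\ov{\vp},\ov{\rho},\psi) \, = \, g(\ov{\psi},\vp,\ov{\rho})\ts,
$$
and then apply \eqref{eq:Kron-upper-groups} to each of these rewritten expressions in its first argument. Since $\ov{\vp}(1)=\vp(1)$ and $\ov{\psi}(1)=\psi(1)$ for any (complex) character, this yields $g(\rho,\vp,\psi) \le \vp(1)$ and $g(\rho,\vp,\psi) \le \psi(1)$. Combining with the hypothesis gives $\vp(1),\psi(1)\ge \rD(G)/a$, as required.

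There is no real obstacle here: the content of the proposition is just the combination of the ``trivial'' bound $g \le \min(\rho(1),\vp(1),\psi(1))$ (which is the symmetrized form of \eqref{eq:Kron-upper-groups}) with the hypothesis. I would present it in a few lines without fanfare, since its main role is to motivate Theorem~\ref{t:Kron-main-intro}, where a converse-type statement is proved.
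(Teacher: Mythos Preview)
Your proof is correct and follows exactly the same route as the paper: apply the bound~\eqref{eq:Kron-upper-groups} in its first argument and then use the symmetries~\eqref{eq:Kron-sym-groups} (together with $\ov{\chi}(1)=\chi(1)$) to handle the other two variables. The paper states this in a single line; your version just spells out the details.
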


\begin{proof}  This follows immediately from~\eqref{eq:Kron-upper-groups}
and the symmetries~\eqref{eq:Kron-sym-groups}.
\end{proof}

\medskip

\subsection{Largest Kronecker multiplicity} \label{ss:Kron-max}
Recall the definition of $\rK(G)$ given in the introduction.  Let
\begin{equation}\label{eq:sum-squares-def}
\rA(G) \, := \,
\sum_{\rho,\vp,\psi \in \Irr(G)} \. g(\rho,\vp,\psi)^2.
\end{equation}

\begin{lemma}\label{l:kron-sum-squares}
We have:
\begin{equation}\label{eq:Kron-squares-groups}
\rA(G) \, = \, \sum_{\al \in \Conj(G)} \. z_\al\ts,
\end{equation}
where $z_\al = |C(\al)|$ is the size of the centralizer of an element $x\in \al$.
\end{lemma}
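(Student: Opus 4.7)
The plan is to rewrite each Kronecker multiplicity as a character inner product, square it as a double sum over $G \times G$, and then apply the second orthogonality (column) relation three times --- once for each of the three irreducibles $\rho,\vp,\psi$. In this sense the target identity is a ``cubed'' version of the Burnside identity $\sum_\rho \rho(1)^2 = |G|$.

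First I would write $g(\rho,\vp,\psi) = \langle \vp\cdot\psi,\rho\rangle = \frac{1}{|G|}\sum_{x\in G} \vp(x)\ts\psi(x)\ts\overline{\rho(x)}$, and use that $g$ is a non-negative integer, hence real, so $g^2 = g\cdot\overline{g}$. Expanding both factors then yields a double sum indexed by $(x,y)\in G\times G$ with integrand $\vp(x)\psi(x)\overline{\rho(x)}\cdot \overline{\vp(y)}\,\overline{\psi(y)}\,\rho(y)$ and prefactor $1/|G|^2$.

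Next, I would sum over $\rho,\vp,\psi\in\Irr(G)$ and interchange the orders of summation so that the character sums become innermost. By the column orthogonality relation, each of the three inner sums of the form $\sum_{\chi\in\Irr(G)}\chi(x)\overline{\chi(y)}$ equals $|C_G(x)|=z_\al$ when $x$ and $y$ both lie in a common class $\al\in\Conj(G)$, and vanishes otherwise. Thus only pairs of conjugate elements contribute, and each such pair produces the product $z_\al\cdot z_\al\cdot z_\al = z_\al^3$.

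Finally, I would collapse the remaining double sum class by class: for each $\al\in\Conj(G)$ there are $|\al|^2 = (|G|/z_\al)^2$ pairs $(x,y)$ with $x,y\in\al$, so this class contributes $(|G|^2/z_\al^2)\cdot z_\al^3 = |G|^2\ts z_\al$. After the prefactor $1/|G|^2$ cancels, one obtains $\sum_{\al\in\Conj(G)} z_\al$, as required. I do not anticipate a genuine obstacle: the argument is a routine bookkeeping exercise in the second orthogonality relation, and the only small subtlety is invoking the reality of $g$ to pass from $g^2$ to $g\cdot\overline{g}$ cleanly before applying orthogonality.
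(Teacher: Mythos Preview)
Your proposal is correct and follows essentially the same route as the paper: write $g(\rho,\vp,\psi)$ as a character sum, use its reality to express $g^2$ as a double sum over $G\times G$, apply column orthogonality three times, and collapse class by class. The paper's only cosmetic difference is that instead of writing $g\cdot\overline{g}$ explicitly, it records the two conjugate formulas for $g$ up front (via $\chi(x^{-1})=\overline{\chi(x)}$) and multiplies them, which is the same maneuver.
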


\begin{proof}[Proof of Lemma~\ref{l:kron-sum-squares}]
By definition, we have:
$$g(\rho,\vp,\psi) \, = \, \frac{1}{|G|} \. \sum_{x \in G} \. \ov{\rho(w)}\ts \vp(x)\ts\psi(x)\ts = \, \frac{1}{|G|} \. \sum_{x \in G} \. \rho(w)\ts \ov{\vp(x)}\ts\ov{\psi(x)}\ts,
$$
noting that $\chi(g^{-1})=\ov{\chi(g)}$ for finite groups.
Hence, we can write the sum of squares as
$$\begin{aligned}
\rA(G) \, & = \, \sum_{\rho,\vp,\psi \in \Irr(G)} \. g(\rho,\vp,\psi)^2 \, = \, \frac{1}{|G|^2} \. \sum_{x,y \in G} \sum_\rho \. \ov{\rho(x)}\ts\rho(y) \. \sum_\vp \. \vp(x)\ts\ov{\vp(y)} \. \sum_{\psi} \. \psi(x)\ts \ov{\psi(y)}\\
&  = \, \frac{1}{|G|^2} \. \sum_{x,y\in G} \left(\sum_\rho \rho(x)\ts \ov{\rho(y)}\right)^3 \, = \, \frac{1}{|G|^2} \. \sum_{\al \in \Conj(G)} \. \left(\frac{|G|}{z_\al}\right)^2 \ts (z_\al)^3\, = \, \sum_{\al \in \Conj(G)} \. z_\al\..
\end{aligned}
$$
Here the last equality follows from orthogonality of the columns in the character table.
\end{proof}

\begin{prop}  \label{p:Kon-max-groups}
We have:
$$
\frac{|G|^{1/2}}{\vk(G)^{3/2}} \, \le \, \rK(G) \, \le \, \rD(G)\ts.
$$
\end{prop}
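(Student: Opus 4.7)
My plan is to prove the upper and lower bounds separately, with the upper bound essentially immediate and the lower bound coming from a second-moment/averaging argument based on Lemma~\ref{l:kron-sum-squares}.

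For the upper bound \ts $\rK(G) \le \rD(G)$, I would simply invoke the general inequality~\eqref{eq:Kron-upper-groups}, which gives \ts $g(\rho,\vp,\psi) \le \rho(1) \le \rD(G)$ \ts for every triple of irreducibles, and take the maximum over $\rho,\vp,\psi$.

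For the lower bound, the plan is to bound $\rA(G)$ from below in a trivial way and from above in terms of $\rK(G)$. On the one hand, Lemma~\ref{l:kron-sum-squares} gives
$$
\rA(G) \, = \, \sum_{\al \in \Conj(G)} \. z_\al \, \ge \, z_1 \, = \, |G|\ts,
$$
using only the contribution of the identity class (where the centralizer is all of~$G$). On the other hand, since the sum defining $\rA(G)$ in~\eqref{eq:sum-squares-def} runs over at most $\vk(G)^3$ triples and each summand is bounded by $\rK(G)^2$, we have
$$
\rA(G) \, \le \, \vk(G)^3 \. \rK(G)^2\ts.
$$
Combining these two inequalities gives \ts $\rK(G)^2 \ge |G|/\vk(G)^3$, and taking square roots yields the desired lower bound.

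There is no real obstacle here: the only mild subtlety is recognizing that the crude ``number of terms times max'' bound for $\rA(G)$ is already enough, because the lower bound on $\rA(G)$ coming from the identity conjugacy class alone is as large as $|G|$. Any improvement would come from including more conjugacy classes in the lower bound for $\rA(G)$, or from a more refined upper bound that uses the fact that most Kronecker multiplicities are far from the maximum; such refinements are what underlie the sharper Theorem~\ref{t:Kron-intro}, but are not needed for this proposition.
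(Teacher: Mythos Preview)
Your proposal is correct and follows exactly the paper's approach: the upper bound is immediate from~\eqref{eq:Kron-upper-groups}, and the lower bound combines $\rA(G)\ge z_1=|G|$ from Lemma~\ref{l:kron-sum-squares} with the trivial estimate $\rA(G)\le \vk(G)^3\,\rK(G)^2$. The paper's proof is simply more terse, leaving the second inequality implicit.
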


\begin{proof}
In~\eqref{eq:Kron-squares-groups}, we have \ts $\rA(G) \ge z_1=|G|$.
This gives the lower bound.  
The upper bound follows from~\eqref{eq:Kron-upper-groups}.
\end{proof}

Theorem~\ref{t:Kron-main-intro} can be viewed as a converse of
Proposition~\ref{p:Kron-main-groups}.

\begin{proof}[Proof of Theorem~\ref{t:Kron-main-intro}]
Let $\rho$ be the character in the largest term in the RHS of
$$
\frac{\rD(G)^2}{a^2} \, \le \, \vp(1) \cdot \psi(1) \. = \. \sum_{\rho\in \Irr(G)} \. g(\rho,\vp,\psi)\. \rho(1)\ts.
$$
On the one hand,
$$
g(\rho,\vp,\psi) \, \ge \, \frac{1}{\vk(G)\cdot \rD(G)} \ts\cdot \ts \frac{\rD(G)^2}{a^2}\, = \, \frac{\rD(G)}{a^2\ts \vk(G)}\..
$$
On the other hand,
$$
\rho(1)^2 \, \ge \, g(\rho,\vp,\psi)\. \rho(1) \, \ge \, \frac{1}{\vk(G)} \ts\cdot \ts\frac{\rD(G)^2}{a^{2}}\.,
$$
which implies the result.
\end{proof}

\medskip

\subsection{Refined Kronecker multiplicities} \label{ss:Kron-ref}
Fix $\rho,\vp\in \Irr(G)$.  Define the \emph{largest refined Kronecker multiplicity}
$$
\rK(G;\ts\rho,\vp) \. := \. \max_{\psi \in \Irr(G)}  \. g(\rho,\vp,\psi)\.
$$
Clearly, $\rK(G;\ts\rho,\vp) \le \rK(G)$.

\begin{prop}\label{p:Kron-refined-groups}  For all $\rho,\vp\in \Irr(G)$, we have:
$$
\frac{\rho(1) \. \vp(1)}{\vk(G)^{1/2} \ts |G|^{1/2}} \, \le \,\rK(G; \ts \rho,\vp) \, \le \, \min\bigl\{\rho(1),\ts \vp(1)\bigr\}.
$$
\end{prop}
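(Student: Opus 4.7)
The statement is a two-sided bound, and I would prove each half separately. For the upper bound, the general inequality~\eqref{eq:Kron-upper-groups} already gives $g(\rho,\vp,\psi)\le\rho(1)$ for every $\psi\in\Irr(G)$, and one of the symmetries in~\eqref{eq:Kron-sym-groups} (for example $g(\rho,\vp,\psi)=g(\ov\vp,\ov\rho,\psi)$) lets me swap the roles of $\rho$ and $\vp$ to also obtain $g(\rho,\vp,\psi)\le\vp(1)$. Taking the maximum over $\psi$ then yields $\rK(G;\rho,\vp)\le\min\{\rho(1),\vp(1)\}$.

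For the lower bound my plan is a second-moment argument via Cauchy--Schwarz, parallel in spirit to the averaging proof of Theorem~\ref{t:Kron-main-intro} but now applied in the ``other'' direction: instead of first-moment information about the sum $\sum_\rho g(\rho,\vp,\psi)\rho(1)$, I would use second-moment information about the sum $\sum_\psi g(\rho,\vp,\psi)^2$. The starting point is the elementary identity
$$\<\rho\cdot\ov\vp,\,\psi\>\,=\,\<\rho,\,\vp\cdot\psi\>\,=\,g(\rho,\vp,\psi),$$
which shows that the honest character $\rho\cdot\ov\vp$ decomposes as $\sum_\psi g(\rho,\vp,\psi)\,\psi$. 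Evaluating this at the identity gives the linear relation
$$\rho(1)\,\vp(1)\,=\,\sum_{\psi\in\Irr(G)} g(\rho,\vp,\psi)\,\psi(1).$$
Applying Cauchy--Schwarz to this sum together with Burnside's identity~\eqref{eq:Burnside} produces
$$\rho(1)^2\,\vp(1)^2\,\le\,\Bigl(\sum_{\psi}g(\rho,\vp,\psi)^2\Bigr)\Bigl(\sum_\psi\psi(1)^2\Bigr)\,=\,|G|\cdot\sum_{\psi}g(\rho,\vp,\psi)^2,$$
and bounding the sum of squares by $\vk(G)\cdot\rK(G;\rho,\vp)^2$ and taking square roots delivers the claimed inequality $\rK(G;\rho,\vp)\ge\rho(1)\vp(1)/\sqrt{|G|\,\vk(G)}$.

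I do not expect any real obstacle: both halves are essentially one-liners once the correct object is written down. The only point that requires a little care is matching conjugations, so that the coefficient of $\psi$ in the decomposition of $\rho\cdot\ov\vp$ comes out as the unconjugated $g(\rho,\vp,\psi)$ and not some conjugate variant; this is exactly what the displayed inner-product identity confirms.
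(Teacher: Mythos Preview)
Your proof is correct and follows essentially the same route as the paper: the upper bound from~\eqref{eq:Kron-upper-groups} and the symmetries~\eqref{eq:Kron-sym-groups}, and the lower bound via the identity $\sum_\psi g(\rho,\vp,\psi)\,\psi(1)=\rho(1)\vp(1)$ combined with Cauchy--Schwarz against Burnside's identity to bound $\sum_\psi g(\rho,\vp,\psi)^2$ from below, then extracting the maximal term using the factor $\vk(G)$. The only cosmetic difference is that the paper names the quantity $A(\rho,\vp):=\sum_\psi g(\rho,\vp,\psi)^2$ and phrases the last step as ``the maximal term in the summation,'' whereas you write the equivalent inequality $\sum_\psi g^2\le \vk(G)\cdot\rK(G;\rho,\vp)^2$ explicitly.
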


\begin{proof} Let
$$
A(\rho,\vp) \, := \, \sum_\psi \. g(\rho,\vp,\psi)^2\ts.
$$
Recall Burnside's identity~\eqref{eq:Burnside} and
$$
\sum_\psi \. g(\rho,\vp,\psi) \ts \psi(1) \, = \, \vp(1)\ts \rho(1)\ts.
$$
Now apply the Cauchy--Schwarz inequality to vectors \.
$\bigl(\psi(1)\bigr), \bigl(g(\rho,\vp,\psi)\bigr)\in \rr^{\vk(G)}$,
both indexed by $\psi \in \Irr(G)$.  We obtain:
$$
A(\rho,\vp) \, \ge \, \frac{\rho(1)^2\ts \vp(1)^2}{|G|}\..
$$
Therefore, for the maximal term in the summation $A(\rho,\vp)$, we have:
$$
\max_\psi \ts g(\rho,\vp,\psi) \, \ge \,
\frac{\rho(1)\ts \vp(1)}{\vk(G)^{1/2} \ts |G|^{1/2}}\,.
$$
This implies the lower bound.  The upper bound follows from~\eqref{eq:Kron-upper-groups}.
\end{proof}

\begin{proof}[Proof of Theorem~\ref{t:LR-intro-bounds}]
  In Proposition~\ref{p:Kron-refined-groups}, take
\ts $\rho,\vp \in \Irr(G)$ \ts s.t.\ \ts $\rho(1)=\vp(1) = \rD(G)$.
\end{proof}

\begin{rem}\label{r:Kron-comparison}
{\rm Note that Theorem~\ref{t:LR-intro-bounds}
and equation~\eqref{eq:dim-groups} imply the lower bound in
Proposition~\ref{p:Kon-max-groups}.  In fact, the latter
lower bound is same bound mentioned in Remark~\ref{r:intro-diag}
for the diagonal subgroup $\rK(H) = \rLR(H\times H,H)$.
}\end{rem}

\bigskip

\section{Induced multiplicities} \label{s:LR}

\subsection{General inequalities} \label{ss:LR-gen}
Let $H<G$ be a subgroup of a finite group $G$ of
index $[G:H] = |G|/|H|$.  For all $\rho \in \Irr(G)$ and
$\pi \in \Irr(H)$, define
the \ts \emph{induced multiplicities} \ts $c(\rho,\pi)$ \ts as follows:
$$c(\rho,\pi) \, := \, \bigl\<\rho,\ts \pi{\upa}^G_H\bigr\>  \, = \,
\bigl\<{\rho\text{\hskip-0.04cm}}\doa^G_H, \ts \pi\bigr\>\ts.
$$
We have:
\begin{equation}\label{eq:LR-groups-def}
\sum_{\rho \in \Irr(G)} \.  c(\rho,\pi) \. \rho(1) \. = \. [G:H]\cdot \pi(1)\quad \ \ \text{and}  \ \
\quad \sum_{\pi \in \Irr(H)} \.  c(\rho,\pi) \. \pi(1)\. = \.\rho(1)\ts.
\end{equation}

\begin{lemma}  \label{l:LR-squared-groups}
For every $H < G$, we have:
\begin{equation}\label{eq:LR-squares}
\sum_{\rho \in \Irr(G)} \. \sum_{\pi \in \Irr(H)} \. c(\rho,\pi)^2 \, = \, \sum_{\al \in \Conj(H)} \. \frac{z_\al(G)}{z_{\al}(H)}\ts,
\end{equation}
where \ts $z_\al(H)=|C_H(x)|$ denotes the size of the centralizer of $x \in \al$
within~$H$, and $z_\al(G)=|C_G(x)|$ is the size of the centralizer within~$G$.
\end{lemma}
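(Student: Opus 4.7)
My plan is to mimic the proof of Lemma~\ref{l:kron-sum-squares} (the Kronecker analogue), replacing one application of column orthogonality by two — one in $\Irr(G)$ and one in $\Irr(H)$.

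First, I would use Frobenius reciprocity to rewrite
$$
c(\rho,\pi) \, = \, \bigl\<\rho\ts{\doa}^G_H,\ts \pi\bigr\>_H \, = \, \frac{1}{|H|}\ts\sum_{h\in H}\ts \rho(h)\ts\ov{\pi(h)}\ts,
$$
so that the squared multiplicity is a double sum over $H\times H$:
$$
c(\rho,\pi)^2 \. = \. \frac{1}{|H|^2}\ts\sum_{h_1,h_2\in H} \ts \rho(h_1)\ts\ov{\rho(h_2)}\ts \ov{\pi(h_1)}\ts \pi(h_2)\ts.
$$
Then I would swap orders of summation, pushing the sums over $\rho\in \Irr(G)$ and $\pi\in\Irr(H)$ inside, so that the RHS of~\eqref{eq:LR-squares} becomes
$$
\frac{1}{|H|^2}\ts\sum_{h_1,h_2\in H} \ts \left(\sum_{\rho\in\Irr(G)}\ts \rho(h_1)\ts\ov{\rho(h_2)}\right)\ts\left(\sum_{\pi\in\Irr(H)}\ts \ov{\pi(h_1)}\ts \pi(h_2)\right)\ts.
$$

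Next, I would apply column orthogonality of the character tables of $G$ and of~$H$ separately. The first inner sum equals $z_{h_1}(G)$ if $h_1\sim_G h_2$ and vanishes otherwise; the second equals $z_{h_1}(H)$ if $h_1\sim_H h_2$ and vanishes otherwise. Since $h_1\sim_H h_2$ implies $h_1\sim_G h_2$, the surviving pairs are exactly those $(h_1,h_2)\in H\times H$ lying in a common $H$-conjugacy class $\al$, and on such a pair the product of the two inner sums equals $z_\al(G)\ts z_\al(H)$.

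Finally, I would collect by $H$-conjugacy class: each class $\al\in \Conj(H)$ contributes $|\al|^2 = \bigl(|H|/z_\al(H)\bigr)^2$ pairs, giving
$$
\sum_{\rho,\pi}\. c(\rho,\pi)^2 \, = \, \frac{1}{|H|^2}\ts \sum_{\al\in\Conj(H)}\. \frac{|H|^2}{z_\al(H)^2}\. \ts z_\al(G)\ts z_\al(H) \, = \, \sum_{\al\in\Conj(H)}\. \frac{z_\al(G)}{z_\al(H)}\ts,
$$
as claimed. The only subtlety — the ``main obstacle'' if one can call it that — is recognizing that the $H$-orthogonality is more restrictive than the $G$-orthogonality, so only $H$-conjugate pairs survive; once that is noticed, the centralizer bookkeeping collapses cleanly to the stated formula.
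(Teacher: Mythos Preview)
Your proof is correct and follows essentially the same approach as the paper's: both expand $c(\rho,\pi)$ as the inner product $\langle\rho{\doa}^G_H,\pi\rangle_H$, square, swap the order of summation, and apply column orthogonality once in $\Irr(G)$ and once in $\Irr(H)$; the only cosmetic difference is that the paper indexes directly by conjugacy classes $\al,\gamma\in\Conj(H)$ from the outset, while you index by elements $h_1,h_2\in H$ and collect into classes at the end. (One small slip: where you write ``the RHS of~\eqref{eq:LR-squares} becomes'' you mean the LHS.)
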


\begin{proof}  Denote by \ts $\xi = \rho|_H$ \ts  the restriction of the character $\rho$ to~$H$.
We have:
$$c(\rho,\pi) \, = \, \sum_{\al \in \Conj(H)}\. z_\al^{-1} \ts \xi(\al) \ts \ov{\pi(\al)}\ts,
$$
Then:
$$
\aligned
\sum_{\rho \in \Irr(G)} \. \sum_{\pi \in \Irr(H)} \. c(\rho,\pi)^2  \, & = \,
\sum_{\al,\gamma \in \Conj(H)} \. z_\al^{-1}\ts z_\gamma^{-1} \. \sum_{\rho \in \Irr(G)} \. \sum_{\pi \in \Irr(H)} \.  \xi(\al)\ts \ov{\xi(\gamma)} \.  \ov{\pi(\al)}\ts \pi(\gamma)\\
& = \, \sum_{\al \in \Conj(H)} z_\al(H)^{-2} \ts \bigl(z_\al(H)\cdot z_\al(G)\bigr) \,  = \, \sum_{\al \in \Conj(H)} \. \frac{z_\al(G)}{z_{\al}(H)}\.,
\endaligned
$$
as desired.
\end{proof}

\begin{cor}  \label{c:LR-squared-groups-ineq}
For every $H < G$, we have:
$$
\sum_{\rho \in \Irr(G)} \. \sum_{\pi \in \Irr(H)} \. c(\rho,\pi)^2 \, \ge \,[G:H] \ts.
$$
\end{cor}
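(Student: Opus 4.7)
The plan is to deduce the corollary directly from the exact identity established in Lemma~\ref{l:LR-squared-groups}, namely
$$\sum_{\rho \in \Irr(G)} \sum_{\pi \in \Irr(H)} c(\rho,\pi)^2 \, = \, \sum_{\alpha \in \Conj(H)} \. \frac{z_\alpha(G)}{z_\alpha(H)}\ts,$$
by isolating a single term in the right-hand sum.

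First, I would observe that every summand on the right is a positive rational number: for any $x \in H$ the centralizer $C_H(x) = C_G(x) \cap H$ is a subgroup of $C_G(x)$, so $z_\alpha(H)$ divides $z_\alpha(G)$ and the ratio $z_\alpha(G)/z_\alpha(H) = [C_G(x) : C_H(x)]$ is a positive integer. In particular, the entire sum is bounded below by any one of its terms.

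Next, I would pick out the term corresponding to the identity conjugacy class $\alpha = \{1\}$. There we have $C_G(1) = G$ and $C_H(1) = H$, hence $z_1(G) = |G|$ and $z_1(H) = |H|$, so this single term contributes exactly $|G|/|H| = [G:H]$. Combining this with the positivity of the remaining terms yields the claimed inequality, completing the proof.

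There is essentially no obstacle here: the content is entirely in Lemma~\ref{l:LR-squared-groups}, and the corollary is an immediate positivity-plus-identity-term consequence. The only thing worth being careful about is verifying that the remaining summands are genuinely nonnegative (which is automatic from the definition of centralizers), so that dropping them yields a valid lower bound rather than a spurious one.
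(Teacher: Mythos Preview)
Your proof is correct and follows exactly the same approach as the paper: invoke Lemma~\ref{l:LR-squared-groups} and bound the right-hand side below by the single term for the identity class, which equals $z_1(G)/z_1(H)=[G:H]$. Your additional remark that each summand is a positive integer (since $C_H(x)=C_G(x)\cap H$) is a nice bit of extra care, but not needed for the bare inequality.
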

\begin{proof}
Since in \eqref{eq:LR-squares} the \ts RHS~$\ge z_1(G)/z_1(H) = [G:H]$, we obtain the inequality.
\end{proof}

\begin{rem}\label{r:LR-identity}
{\rm  Note that Lemma~\ref{l:kron-sum-squares} easily follows from
Lemma~\ref{l:LR-squared-groups} by taking the diagonal subgroup in
$G\times G$ as in Remark~\ref{r:intro-diag}.  The details are
straightforward.  We chose to keep both proofs for clarity of
exposition.
}\end{rem}

\begin{lemma}  \label{l:LR-squared-groups-2}
For every $H < G$, we have:
$$\sum_{\rho \in \Irr(G)} \.  c(\rho,\pi)^2 \, \le \,  [G:H] \quad \ \text{and} \ \quad
\sum_{\pi \in \Irr(H)} \. c(\rho,\pi)^2 \, \le \,  [G:H]\ts.$$
\end{lemma}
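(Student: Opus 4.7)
\smallskip

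The plan is to prove both inequalities from the two degree identities in~\eqref{eq:LR-groups-def}, which together give sharp per-term bounds on $c(\rho,\pi)$ that we can feed back into the squared sum. No Mackey theory or character-table manipulation is required; the argument is essentially a weighted version of ``$x^2 \le x \cdot M$ when $x \le M$''.

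First I would extract the two pointwise bounds that fall out of~\eqref{eq:LR-groups-def} termwise. Since all $c(\rho,\pi)$ are nonnegative integers, the identity $\sum_{\pi} c(\rho,\pi)\ts\pi(1) = \rho(1)$ forces
$$
c(\rho,\pi) \, \le \, \frac{\rho(1)}{\pi(1)}\ts,
$$
and symmetrically the identity $\sum_{\rho} c(\rho,\pi)\ts\rho(1) = [G:H]\ts\pi(1)$ forces
$$
c(\rho,\pi) \, \le \, \frac{[G:H]\ts\pi(1)}{\rho(1)}\ts.
$$

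Next I would plug each of these bounds into the corresponding square sum. For the first inequality of the lemma, fix $\pi\in\Irr(H)$ and apply $c(\rho,\pi)\le \rho(1)/\pi(1)$ to one factor of $c(\rho,\pi)^2$, then collapse the remaining sum via~\eqref{eq:LR-groups-def}:
$$
\sum_{\rho\in\Irr(G)} c(\rho,\pi)^2 \, \le \, \sum_{\rho\in\Irr(G)} c(\rho,\pi)\cdot\frac{\rho(1)}{\pi(1)} \, = \, \frac{1}{\pi(1)}\sum_{\rho\in\Irr(G)} c(\rho,\pi)\ts\rho(1) \, = \, \frac{[G:H]\ts\pi(1)}{\pi(1)} \, = \, [G:H]\ts.
$$
For the second inequality, fix $\rho\in\Irr(G)$ and apply the other bound $c(\rho,\pi)\le [G:H]\ts\pi(1)/\rho(1)$ to one factor, then use $\sum_\pi c(\rho,\pi)\ts\pi(1)=\rho(1)$ in the same way. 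No step is subtle; the main ``obstacle'' is just to recognize which factor of $c(\rho,\pi)^2$ to bound with which inequality, so that the remaining linear sum is exactly one of the two identities in~\eqref{eq:LR-groups-def}.
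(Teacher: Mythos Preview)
Your proposal is correct and essentially identical to the paper's own proof: the paper bounds one factor of $c(\rho,\pi)^2$ by $\rho(1)/\pi(1)$ (respectively $[G:H]\ts\pi(1)/\rho(1)$) and then collapses the remaining linear sum using~\eqref{eq:LR-groups-def}, exactly as you do. Your write-up just makes the two pointwise bounds explicit before plugging them in, which is a harmless expository difference.
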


\begin{proof}  We have:
$$\sum_{\rho \in \Irr(G)} \.  c(\rho,\pi)^2 \, \le \, \sum_{\rho \in \Irr(G)} \.
c(\rho,\pi) \. \frac{\rho(1)}{\pi(1)} \, = \,  \frac{1}{\pi(1)} \ts \cdot \ts \pi(1)\ts [G:H]
\, = \,  [G:H]\ts,
$$
$$\sum_{\pi \in \Irr(H)} \.  c(\rho,\pi)^2 \, \le \, \sum_{\pi \in \Irr(G)} \.
c(\rho,\pi) \. \frac{\pi(1)\cdot [G:H]}{\rho(1)} \, = \,  \frac{1}{\rho(1)} \ts \cdot \ts \rho(1)\ts [G:H]
\, = \,  [G:H]\ts,
$$
where we repeatedly use both equations in~\eqref{eq:LR-groups-def}.
\end{proof}

\begin{cor}  \label{c:LR-squared-groups}
For every $H < G$, we have:
$$
[G:H] \, \le \,
\sum_{\rho \in \Irr(G)} \. \sum_{\pi \in \Irr(H)} \. c(\rho,\pi)^2
\, \le \, [G:H] \. \min\bigl\{\vk(G),\ts\vk(H)\bigr\} \ts.
$$
\end{cor}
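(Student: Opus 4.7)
The plan is to derive this corollary directly from the two technical lemmas already in hand, namely Corollary~\ref{c:LR-squared-groups-ineq} for the lower bound and Lemma~\ref{l:LR-squared-groups-2} for the upper bound. No new identity will be needed; the proof is essentially a combination followed by taking a minimum.

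For the lower bound, I would simply quote Corollary~\ref{c:LR-squared-groups-ineq}, which says
$$
\sum_{\rho \in \Irr(G)} \. \sum_{\pi \in \Irr(H)} \. c(\rho,\pi)^2 \, \ge \, [G:H]\ts,
$$
obtained from the identity in Lemma~\ref{l:LR-squared-groups} by dropping every term except the one indexed by the identity class, where $z_1(G)/z_1(H) = |G|/|H| = [G:H]$. This half is immediate.

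For the upper bound, the idea is to sum the two estimates of Lemma~\ref{l:LR-squared-groups-2} in the two possible orders. Summing the first inequality \ts $\sum_{\rho} c(\rho,\pi)^2 \le [G:H]$ \ts over the $\vk(H)$ characters $\pi \in \Irr(H)$ yields
$$
\sum_{\pi \in \Irr(H)} \. \sum_{\rho \in \Irr(G)} \. c(\rho,\pi)^2 \, \le \, [G:H] \cdot \vk(H)\ts,
$$
and similarly summing the second inequality \ts $\sum_{\pi} c(\rho,\pi)^2 \le [G:H]$ \ts over the $\vk(G)$ characters $\rho \in \Irr(G)$ yields the analogous bound with $\vk(G)$ in place of $\vk(H)$. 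Taking the smaller of the two gives the $\min\{\vk(G),\vk(H)\}$ factor.

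There is no real obstacle here: the corollary is a bookkeeping consequence of results already established, and the only subtlety worth flagging is that the two halves of Lemma~\ref{l:LR-squared-groups-2} are genuinely needed to get the symmetric minimum, since either one alone only produces one of the two candidate upper bounds.
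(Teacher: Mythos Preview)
Your proposal is correct and matches the paper's intended argument: the corollary is stated immediately after Corollary~\ref{c:LR-squared-groups-ineq} and Lemma~\ref{l:LR-squared-groups-2} precisely because it follows by quoting the former for the lower bound and summing each inequality of the latter over the appropriate index set for the upper bound. There is nothing to add.
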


Note that $\vk(H)$ can be much larger that $\vk(G)$.  For example,
take \ts $H=\zz_2^{n/2}$ and $G=S_n$.  Then $\vk(H)=2^{n/2}$, while
$\vk(S_n) = e^{\Theta(\sqrt{n})}$.

\medskip

\subsection{Largest induced multiplicity}\label{s:LR-max}
Recall the definition of $\rLR(G,H)$ from the introduction.  We have:

\begin{proof}[Proof of Theorem~\ref{t:LR-intro-bounds}]
The lower bound follows immediately from Corollary~\ref{c:LR-squared-groups-ineq},
while the upper bound follows from Lemma~\ref{l:LR-squared-groups-2}.
\end{proof}

\begin{proof}[Proof of Theorem~\ref{t:LR-intro-exist}]
Let $\pi$ be the character in the largest term in the RHS of
$$
|G|^{1/2}/a \, \le \, \rho(1) \, = \, \sum_{\pi\in \Irr(H)} \. c(\rho,\pi)\. \pi(1)\ts.
$$
On the one hand, by the upper bound in Theorem~\ref{t:LR-intro-bounds} we have:
$$
\pi(1) \, \ge \, \frac{\rho(1)}{\vk(H)\cdot \rLR(G,H)}  \, \ge \,
\frac{|G|^{1/2}/a}{\vk(H)\cdot [G:H]^{1/2}} \, = \,
\frac{|H|^{1/2}}{a \ts \vk(H)}\..
$$
On the other hand,
$$
c(\rho,\pi) \, \ge \, \frac{\rho(1)}{\vk(H)\cdot \rD(H)}
\, \ge \,
\frac{|G|^{1/2}/a}{\vk(H)\cdot |H|^{1/2}}
\, = \,
\frac{[G:H]^{1/2}}{a \ts \vk(H)}\,,
$$
as desired.
\end{proof}

\begin{rem} \label{r:LR-groups}
{\rm Theorem~\ref{t:LR-intro-exist} above is patterned after
Theorem~\ref{t:Kron-main-intro}.
Note, however, that we do not have an analogue of a much simpler
Proposition~\ref{p:Kron-main-groups}.
}\end{rem}

\bigskip

\section{Final remarks}\label{s:fin-rem}

\subsection{}\label{ss:finrem-hist}
The study of $\rD(S_n)$ was initiated back in 1954, in one of the
earliest uses of computer calculations in combinatorics and algebra~\cite{BMSW}.
The study continued in a long series of papers~\cite{BDJ,Mc,LS,VK1,VK2,VP},
in part due to connections to random unitary matrices and longest increasing
subsequences in random permutations.  Let us single out
papers~\cite{LS,VK1} which determined the \emph{limit shape} of the
partition~$\la$ corresponding to the largest character~$\chi^\la$,
and~\cite{BDJ} which determined the exact distribution of shapes~$\la$.
Stanley's questions are best viewed as part of this research direction.
We refer to~\cite{Rom} for a comprehensive overview of the area.

For general groups, parameter Snyder in~\cite{Sny} introduced parameter
$e(G)$ defined by $\rD(G) \bigl(\rD(G)+e(G)\bigr)= |G|$.  Parameter
$e(G)$  is closely related to~$\ve(G)$, and was the motivation for
a series of recent papers improving bound on both~\cite{HLS,HHN,Isa,LMT}.

\subsection{}\label{ss:finrem-Stanley}
The literature on Kronecker and Littlewood--Richardson coefficients is so vast, there is no
single source that would give it justice.  We refer to~\cite{Stanley-EC2}
for a comprehensive introduction to the subject and to~\cite{Ful,vL} for
connections to Algebra and Geometry, and to~\cite{PPY} for further
references.

We should mention that from the point of view of Schur duality,
one can describe the classical Littlewood--Richardson coefficients $c^\la_{\mu\nu}$ of $S_n$
as a special case of Kronecker multiplicities for $\GL_N(q)$.  Indeed, by
taking \ts $N \ge 2\ell(\la)$ and $q$ large enough, the
Kronecker multiplicities \ts $g(\chi^\la,\chi^\mu,\chi^\nu)$ \ts
of the corresponding $\GL_N(q)$-reps become polynomial in~$q$.
Letting $q\to 1$ in these polynomials recovers $c^\la_{\mu\nu}$.
Thus, estimating the Kronecker multiplicities for $\GL_N(q)$
is likely to be difficult.

\subsection{}\label{ss:finrem-buf}
It was shown by Bufetov~\cite{Buf} that w.r.t.\ the
Plancherel measure there is a concentration of \ts
$$\frac{1}{\sqrt{n}} \log \frac{\chi^\la(1)^2}{n!} \quad \text{as} \ \, n\to \infty
$$
at some $h \in [-2\co,-2\ct]$, where $c_1,c_2$ are given in~\eqref{eq:VK-const}.
If such $h$ was determined, this would further
improve the asymptotic bounds on $\ve(S_n)$ given in the proof of Theorem~\ref{t:Sn-ve}.
Numerical experiments in~\cite{VP} suggest that there is a limit
$$
\eta \, = \, \lim_{n\to \infty} \. \frac{1}{\sqrt{n}} \log \frac{\rD(S_n)^2}{n!}
$$
and that $h<\eta$.

\subsection{}\label{ss:finrem-sum-dim}
It was noted by McKay~\cite{Mc} and Kowalski~\cite[p.~80]{Kow} that
for some families of groups a nice interpretation for the sum of degrees
are known:
$$
f(G) \, = \, \sum_{\chi\in \Irr(G)} \. \chi(1)\ts.
$$
Namely, $f(S_n)$ is the number of involutions, $f\bigl(\GL_n(q)\bigr)$
is the number of symmetric matrices, etc.  We refer to~\cite{Vin} for
the unified view of these results and review of prior work by Gow,
Klyachko, and others.  We should mention that for our applications,
these formulas give weaker bounds compared to~\eqref{eq:dim-groups}.
For~$S_n$, this was pointed out in~\cite{VK2}, who improved
upon McKay's lower bound.

\subsection{}\label{ss:finrem-LR} It would be interesting to see
if we always have \ts $\rLR(G,H)\le \sqrt{b(G)/b(H)}$,
which would be sharper in some cases and match the upper bound \ts
$\rK(G)\le \rD(G)$ \ts in the diagonal embedding case, see
Remark~\ref{r:intro-diag}.  We have not checked this speculation
on a computer.

\vskip.76cm

\subsection*{Acknowledgements}
We are grateful to Jan Saxl and Richard Stanley for introducing us
to the area, and to Persi Diaconis, Olivier Dudas, Alejandro Morales,
Leonid Petrov, Dan Romik, Rapha\"el Rouquier and Pham Tiep for
helpful comments and discussions.  Alexander Hulpke kindly computed
the value $\rK(M)$ in~$\S$\ref{ss:monster}.  The first and second
authors were partially supported by the~NSF.

\vskip.8cm

\end{document}